\documentclass[11pt]{amsart}

\usepackage{amsmath}
\usepackage{amssymb}
\usepackage{amsthm}
\usepackage{enumerate}
\usepackage{amsbsy}
\usepackage{amsfonts}

\usepackage[bitstream-charter]{mathdesign}
\usepackage[T1]{fontenc}

\topmargin 0.25in \textheight 8.5in \flushbottom
\setlength{\textwidth}{6.in} 
\setlength{\oddsidemargin}{.25in} 
\setlength{\evensidemargin}{.25in}

\usepackage{amsthm}
\newtheorem{thm}{Theorem}
\newtheorem{lem}[thm]{Lemma}

\newtheorem{rem}{Remark}

\newcommand{\Set}[1]{{\left\{#1\right\}}}
\newcommand{\abs}[1]{\left\vert#1\right\vert}
\newcommand{\Abs}[1]{\Big\vert#1\Big\vert}
\newcommand{\snoi}{\sum_{n=1}^{\infty}}
\newcommand{\intzi}{\int_0^\infty}
\newcommand{\intzo}{\int_0^1}
\newcommand{\intoi}{\int_1^\infty}

\begin{document}
\baselineskip=18pt

\title[Sampling of the Riemann zeta function]{An estimate of the second moment of a sampling of the Riemann zeta function on the critical line}

\author{Sihun Jo \& Minsuk Yang}

\address{Sihun Jo: School of Mathematics, Korea Institute for Advanced Study, 85 Hoegiro Dongdaemungu, Seoul, Republic of Korea}
\address{Minsuk Yang: School of Mathematics, Korea Institute for Advanced Study, 85 Hoegiro Dongdaemungu, Seoul, Republic of Korea}
\email{yangm@kias.re.kr}

\begin{abstract}
We investigate the second moment of a random sampling $\zeta(1/2+iX_t)$ of the Riemann zeta function on the critical line.
Our main result states that if $X_t$ is an increasing random sampling with gamma distribution, then for all sufficiently large $t$,
\[\mathbb{E} |\zeta(1/2+iX_t)|^2 = \log t + O(\sqrt{\log t}\log\log t).\]
\\

\noindent {\it Keywords:} 
Riemann zeta-function,
Gamma process,
van der Corput's method
\end{abstract}

\maketitle

\section{Introduction}
\label{S1}

This paper is concerned with the behaviour of the Riemann zeta function $\zeta(s)$ along the critical strip $s=1/2+it$ by modelling the variable $t$ with a random sampling.
As is well known, the Riemann zeta function $\zeta(s)$ is defined as an analytic continuation of the function initially defined for all complex numbers $s=\sigma+it$ with real part greater than 1 by the absolutely convergent series
\[\zeta(s) = \snoi n^{-s}.\]

Lifshits and Weber \cite{MR2472167} studied the behaviour of the Riemann zeta function $\zeta(1/2+it)$, when $t$ is sampled by the Cauchy random walk.
They used Cauchy distribution because the necessary moment expressions for Cauchy distribution are by far more explicit than in other cases.
They remarked that they believe that the results similar to theirs are valid for sampling with a large class of random walks with discrete or continuous steps.

Here is our main result.

\begin{thm}
\label{S3:L3}
Let $X_t$ denote the gamma process with parameters $a=b=1$.
Then for all sufficiently large $t$,
\[\mathbb{E} |\zeta(1/2+iX_t)|^2 = \log t + O(\sqrt{\log t}\log\log t).\]
\end{thm}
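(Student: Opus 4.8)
The plan is to write the expectation as an integral against the Gamma density and to exploit the explicit Laplace transform of that density. Since $X_t$ has density $x^{t-1}e^{-x}/\Gamma(t)$ on $(0,\infty)$,
\[\mathbb{E}|\zeta(1/2+iX_t)|^2 = \frac{1}{\Gamma(t)}\intzi |\zeta(1/2+ix)|^2 x^{t-1}e^{-x}\,dx,\]
and the key elementary identity is
\[\frac{1}{\Gamma(t)}\intzi x^{t-1}e^{-x}e^{i\alpha x}\,dx = (1-i\alpha)^{-t}, \qquad |(1-i\alpha)^{-t}| = (1+\alpha^2)^{-t/2} = \exp\!\left(-\tfrac{t}{2}\log(1+\alpha^2)\right),\]
which is exponentially small unless $|\alpha|\ll t^{-1/2}$. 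Since the Gamma density concentrates on $|x-t|\ll \sqrt{t}\,\log t$ while $\zeta(1/2+ix)$ grows only polynomially, I would first discard the contribution of $x$ outside this window by the standard large-deviation bound for the Gamma law, reducing matters to $x\asymp t$.

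On that range insert the approximate functional equation, $\zeta(1/2+ix)=A(x)+\chi(1/2+ix)B(x)+O(x^{-1/4})$, where $A(x)=\sum_{n}n^{-1/2-ix}w(n/N_x)$ and $B(x)=\sum_n n^{-1/2+ix}w(n/N_x)$ with $N_x=\sqrt{x/2\pi}$, $w$ a smooth cutoff, and $|\chi(1/2+ix)|=1$. Squaring, $|\zeta(1/2+ix)|^2$ splits into a diagonal part coming from $|A|^2+|B|^2$, an off-diagonal part $\sum_{m\neq n}(mn)^{-1/2}(n/m)^{\pm ix}w(m/N_x)w(n/N_x)$, the cross terms containing $\chi(1/2+ix)$, and the error arising from the $O(x^{-1/4})$ remainder. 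The diagonal equals $2\sum_n n^{-1}w(n/N_x)^2=\log(x/2\pi)+O(1)$ by Mellin inversion, and integrating it against the Gamma density gives $\mathbb{E}[\log X_t]+O(1)=\psi(t)+O(1)=\log t+O(1)$, which is the asserted main term.

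It then remains to estimate the three error families after integration in $x$. For the off-diagonal sum the inner $x$-integral is, up to the slow variation of $w(\cdot/N_x)$, the transform above with $\alpha=\log(m/n)$, hence $\ll (1+\log^2(m/n))^{-t/2}$; putting $m=n+h$ confines the sum to $|h|\ll n t^{-1/2}$, and since $n\ll t^{1/2}$ this leaves only a small contribution. The cross terms reduce, via $\chi(1/2+ix)\sim (x/2\pi)^{-ix}e^{ix+i\pi/4}$, to oscillatory integrals $\intzi x^{t-1}e^{-x}e^{i\varphi_{m,n}(x)}\,dx$ with $\varphi_{m,n}(x)=x\log(2\pi mn/x)+x$, whose stationary point $x=2\pi mn$ sits at or beyond the edge $mn=x/2\pi$ of the admissible ranges of $m,n$; here van der Corput's first- and second-derivative estimates, applied to this phase together with the weight $x^{t-1}e^{-x}=e^{(t-1)\log x-x}$, supply the needed decay, and summing over $m,n$ — a divisor-type sum — produces the $\log\log t$ factor. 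Finally the $O(x^{-1/4})$ remainder, paired by Cauchy--Schwarz with $A,B$ whose mean square is $\ll\log t$, contributes $O(t^{-1/4}\sqrt{\log t})$. Collecting these together with the discarded tail gives $\mathbb{E}|\zeta(1/2+iX_t)|^2=\log t+O(\sqrt{\log t}\,\log\log t)$.

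The principal obstacle is the uniform evaluation of the oscillatory integrals in the cross terms: near the transition $mn\approx x/2\pi$ the phase $\varphi_{m,n}$ is nearly stationary over the effective support of $x^{t-1}e^{-x}$ (a window of width $\sqrt{t}$), so one must track carefully how the smooth cutoff $w$ and the Gaussian-type concentration of the Gamma weight interact before the van der Corput bounds can be applied; and one must then assemble all the error contributions — the tails, the functional-equation remainder, the off-diagonal sum, and the cross terms — so that they combine into the single bound $O(\sqrt{\log t}\,\log\log t)$ rather than the weaker $O(\log t)$ that purely trivial estimates would give.
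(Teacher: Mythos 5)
Your route is genuinely different from the paper's. The paper never invokes the approximate functional equation: it uses the elementary continuation $\zeta(s)=1-\int_0^1u^{-s}du+\int_1^\infty\{u\}\frac{d}{du}u^{-s}du$, so that $\mathbb{E}|\zeta(1/2+iX_t)|^2$ becomes a double integral against the kernel $(1+i\log(u/v))^{-t-2}$; the concentration of the characteristic function forces $u\approx v$, and the whole difficulty is then the near-diagonal exponential sums generated by the sawtooth $\{u\}\{v\}$, which they handle with van der Corput's lemmas. Your version trades that for the classical moment setup: a short Dirichlet polynomial of length $\sqrt{t}$, a clean diagonal giving $\mathbb{E}\log(X_t/2\pi)=\log t+O(1)$, a harmless off-diagonal (your reduction to $|h|\ll nt^{-1/2}$ is fine and in fact a trivial bound already gives $O(1)$ there), but in exchange you inherit the $\chi$-cross terms, which the paper's method never sees.

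That is where your argument has a genuine gap. For the cross terms the phase $\varphi_{m,n}(x)=x\log(2\pi mn/x)+x$ has $\varphi''(x)=-1/x$, so over the effective support of $x^{t-1}e^{-x}$ (width $\asymp\sqrt{t}$) the phase is essentially \emph{linear}: the inner integral is, up to a unimodular factor, $\approx(1-i\mu)^{-t}$ with $\mu=\log(2\pi mn/t)$, of modulus $\approx e^{-t\mu^2/2}$. Van der Corput's derivative tests therefore buy you nothing beyond this; what survives is
\[
\sum_{\substack{m,n\in\mathrm{supp}\,w\\ |\log(2\pi mn/t)|\ll\sqrt{\log t/t}}}\frac{e^{i\varphi_{m,n}(t)}}{\sqrt{mn}}e^{-\frac{t}{2}\log^2(2\pi mn/t)},
\]
a divisor-type sum over $mn$ in a window of length $\asymp\sqrt{t\log t}$ about $t/2\pi$. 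With a smooth cutoff $w$ supported on a dyadic range, the number of such pairs is $\asymp\sqrt{t\log t}\cdot\log t$, and the trivial bound is $\asymp\log^{3/2}t$ --- \emph{larger} than the target error $\sqrt{\log t}\,\log\log t$. So you need genuine cancellation in the arithmetic sum over $(m,n)$ (Atkinson/Ingham-type analysis of $\sum d(N)e^{it\log N}$), and nothing in your sketch supplies it; the assertion that the divisor sum ``produces the $\log\log t$ factor'' is exactly the step that is missing. You correctly flag this as the principal obstacle, but as written the proposal does not close it, whereas the paper's decomposition is arranged precisely so that no such cross terms arise.
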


The gamma process has two positive parameters $a$ and $b$.
We set $a=b=1$ for notational convenience.
We note that the gamma process is increasing, its average value is $t$, and its variance is $t$.
So, we use it to describe the situation how $\zeta(1/2+it)$ behaves as $t$ tends to infinity.
In the next section, we recall the definition and basic properties of the gamma process.
We will extensively use the Landau notation $f=O(g)$, which means that $|f(x)|\le Cg(x)$ for some unspecified constant $C$. 
We also use the Vinogradov notation $f \ll g$; it is equivalent to $f=O(g)$.

We now give a few remarks.

\begin{rem}
Theorem \ref{S3:L3} is a probabilistic analog of the famous result obtained by Hardy and Littlewood; as $T\to\infty$,
\[\frac{1}{T} \int_0^T |\zeta(1/2+it)|^2 dt = \log T + O(1).\]
\end{rem}

\begin{rem}
Jutila \cite{MR705532} obtained the following estimate for value distribution of the Riemann zeta function
\[\frac{1}{T} \mathrm{meas}(M_T(V)) \ll \exp\left(-\frac{\log^2 V}{\log\log T}\left(1+O\left(\frac{\log V}{\log\log T}\right)\right)\right),\]
where $T\ge2$, $1\le V\le\log T$, and $M_T(V) = \Set{0\le t\le T: |\zeta(1/2+it)| \ge V}$.
\end{rem}

\begin{rem}
From Chebyshev's inequality, it is easy to see that the random sampling $\frac{|\zeta(1/2+iX_t)|}{\log t}$ converges to zero in probability.
It is also a probabilistic analog of Jutila's result with $V=\log T$, although his result is much stronger.
\end{rem}

Let us explain our method of proof.
We begin by analytically extending the zeta function with suitable form, and then investigate the moment of the sampling $\zeta(\sigma+iX_t)$.
Taking expectation is equivalent to considering the Fourier transform of probability measure of gamma process.
The resulting equation is a type of oscillatory integrals.
It is easy to estimate the first moment by applying repeated integration by parts.
However, it is not so easy to estimate the second moment.
The key idea in our strategy is divide and conquer algorithm based on recursion and iteration.
We decompose sums and integrals into several pieces and then use a refined version of van der Corput's method to estimate each terms.
The argument is technically elementary, but delicate.

\section{Preliminaries}
\label{S2}

\subsection{Gamma process}

The gamma process $X_t^{a,b}$ plays a role as a natural continuous time analogue of independent and identically distributed sequence of positive increasing random variables.
It is a pure-jump increasing L\'evy process with independent gamma distributed increments with two positive parameters $a$ and $b$.
The law of $X_t^{a,b}$ is given by
\[d\mathcal{P}_{X_t^{a,b}}(x) = 1_{(0,\infty)}(x) \frac{b^{at}}{\Gamma(at)}x^{at-1}e^{-bx} dx.\]
The parameter $a$ controls the rate of jump arrivals and the scaling parameter $b$ inversely controls the jump size.
From now, we fix the parameter $a=b=1$ for convenience and denote by $X_t$ the corresponding gamma process.
A calculation shows that its average value is $\mathbb{E}(X_t) = \intzi x d\mathcal{P}_{X_t}(x) = t$, and its variance is $\mathbb{E}|X_t|^2-|\mathbb{E}X_t|^2 = t$. 
The characteristic function of $X_t$ is defined by the Fourier transform of the law $\mathcal{P}_{X_t}$, that is,
\begin{equation}\label{S3:E4}
\mathbb{E}(e^{iu X_t}) = (1-iu)^{-t}.
\end{equation}

\subsection{Van der Corput's lemmas}

The proof of the following lemmas can be found, for example, in \cite{MR2061214}.

\begin{lem}
\label{S2:L1}
Let $f(x)$ be a real-valued function with $f''(x)>0$ on the interval $[a,b]$ and let $g$ be any smooth function $[a,b]$.
Then
\begin{align*}
\sum_{a<n<b} g(n) \exp(2\pi if(n)) 
&=\sum_{\alpha-\epsilon<m<\beta+\epsilon}\int_a^b g(x) \exp(2\pi i (f(x)-mx)) dx \\
&\quad+O(G(\epsilon^{-1}+\log (\beta-\alpha+2))),
\end{align*}
where
$\alpha,\beta$ and $\epsilon$ are any numbers with $\alpha\leq f'(a)\leq f'(b)\leq \beta$ and $0<\epsilon\leq 1$, $G=|g(b)|+\int_a^b|g'(y)|dy$, and the implied constant is absolute.
\end{lem}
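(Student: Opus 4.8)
The plan is to prove this van der Corput transformation formula (Lemma \ref{S2:L1}) by combining the truncated Poisson summation formula with a stationary-phase-free treatment of the resulting Fourier integrals, following the classical route (cf. \cite{MR2061214}). First I would reduce to the case where $g$ is real and nonnegative by splitting $g$ into real and imaginary parts, and then writing each as a difference of monotone pieces; the bound on $G=|g(b)|+\int_a^b|g'|$ is exactly what controls this decomposition and survives it, so no generality is lost. With $g$ smooth and of bounded variation on $[a,b]$, I would introduce the function $\psi(x) = \sum_{a<n<b}$ counting via the sawtooth identity, i.e.\ write the sum $\sum_{a<n<b} h(n)$ for $h(x)=g(x)e^{2\pi i f(x)}$ as a Stieltjes integral against $d\lfloor x\rfloor$ and use $\lfloor x\rfloor = x - 1/2 - \rho(x)$ where $\rho$ is the periodic sawtooth, expanded in its Fourier series $\rho(x) = \sum_{m\neq 0} \frac{e^{2\pi i m x}}{2\pi i m}$.

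The core step is then to integrate by parts and substitute this Fourier expansion, producing the main terms $\sum_m \int_a^b g(x) e^{2\pi i(f(x)-mx)}\,dx$. The restriction of the summation range to $\alpha-\epsilon < m < \beta+\epsilon$ comes from the observation that when $m$ lies outside $[f'(a),f'(b)]$ by more than $\epsilon$, the phase $f(x)-mx$ has derivative $f'(x)-m$ that is bounded away from zero (since $f''>0$ forces $f'$ to be increasing, hence $f'(x)\in[f'(a),f'(b)]$); for such $m$ one integrates by parts once more, gaining a factor $\ll |m - f'(\cdot)|^{-1}$, and sums the tails geometrically. This is where the $\epsilon^{-1}$ term in the error arises: the sum $\sum_{|m-c|>\epsilon} |m-c|^{-1}$ over a dyadic decomposition is $O(\epsilon^{-1})$ after accounting for the total variation of $g$, i.e.\ a factor $G$. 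The $\log(\beta-\alpha+2)$ term arises from the $O(1/|m|)$ decay of the Fourier coefficients of $\rho$ summed over the surviving range $|m| \lesssim \beta - \alpha$, which is only logarithmically convergent; here too one pays a factor of $G$.

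I would organize the estimate of the non-stationary contributions by writing, for $m$ with $f'(a) - m \geq \epsilon$ or $m - f'(b) \geq \epsilon$,
\begin{equation*}
\int_a^b g(x) e^{2\pi i(f(x)-mx)}\,dx = \left[\frac{g(x)e^{2\pi i(f(x)-mx)}}{2\pi i(f'(x)-m)}\right]_a^b - \frac{1}{2\pi i}\int_a^b e^{2\pi i(f(x)-mx)} \frac{d}{dx}\!\left(\frac{g(x)}{f'(x)-m}\right)dx,
\end{equation*}
and bounding the derivative term using $f''>0$ (so $f'-m$ is monotone and does not change sign) together with the bound on $\int|g'|$. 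The monotonicity of $f'-m$ is crucial: it lets me control $\int_a^b |d(1/(f'(x)-m))|$ by a telescoping bound $\ll 1/\epsilon$ uniformly. Summing over all such $m$ gives the claimed $O(G\epsilon^{-1})$.

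The main obstacle I anticipate is handling the boundary and convergence issues rigorously: the Fourier series of the sawtooth $\rho$ converges only conditionally, so interchanging the sum over $m$ with the integral over $x$ requires care (e.g.\ working with symmetric partial sums and the Dirichlet kernel, or truncating smoothly and estimating the tail). A related delicacy is the behaviour near $m \approx f'(a)$ or $m \approx f'(b)$, where the "main term" integrals are genuinely present and must not be discarded — this is exactly why the summation range is $\alpha-\epsilon < m < \beta+\epsilon$ rather than a sharp cutoff, giving a buffer of width $\epsilon$ so that the transition between "stationary-like" and "non-stationary" $m$ is clean. Once these technical points are dispatched, assembling the pieces is routine bookkeeping: collect the main terms over $\alpha-\epsilon<m<\beta+\epsilon$, bound the tails by $O(G\epsilon^{-1})$, and bound the sawtooth-coefficient tail by $O(G\log(\beta-\alpha+2))$.
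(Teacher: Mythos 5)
The paper itself offers no proof of Lemma \ref{S2:L1}; it is quoted from Iwaniec--Kowalski \cite{MR2061214}, and your sketch follows the same classical route (sawtooth/truncated Poisson expansion, then integration by parts on the non-stationary frequencies), so the architecture is the right one. There is, however, a genuine quantitative gap in your tail estimate. After one integration by parts the integral attached to a non-stationary $m$ is bounded by $\ll G/\mathrm{dist}(m,[\alpha,\beta])$, and you then assert that $\sum_{|m-c|>\epsilon}|m-c|^{-1}=O(\epsilon^{-1})$ ``over a dyadic decomposition.'' Taken over all integers $m$ outside the window, that sum diverges: the block with $\epsilon<m-\beta\le\beta-\alpha+2$ already contributes $\asymp\epsilon^{-1}+\log(\beta-\alpha+2)$ --- this finite harmonic block, not the $1/m$ decay of the sawtooth coefficients over the \emph{surviving} range as you suggest, is the actual source of the $\log(\beta-\alpha+2)$ in the error term (note the main terms in the lemma carry no $1/m$ factor) --- and the remaining tail $\sum_{m-\beta>\beta-\alpha+2}(m-\beta)^{-1}$ is simply not summable. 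To close this you need a second input for large $|m|$: either a second integration by parts (using $f''>0$ and the monotonicity of $f'-m$) to gain $\mathrm{dist}(m,[\alpha,\beta])^{-2}$, which is summable, or the pairing of the frequencies $m$ and $-m$ in the conditionally convergent series $\sum_{m\ne0}e^{2\pi imx}/(2\pi im)$, whose $1/m$ coefficients then supply the missing decay. Without one of these devices the stated error $O\big(G(\epsilon^{-1}+\log(\beta-\alpha+2))\big)$ is not reached.

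Two smaller points. The interchange-of-limits issue you flag is real, and in the standard argument it is resolved by working with symmetric partial sums $\sum_{|m|\le M}$ and letting $M\to\infty$ only after the integration by parts --- which dovetails with the $\pm m$ pairing above, so the fix for the divergence and the fix for the conditional convergence are the same. Your preliminary reduction to real, monotone $g$ is unnecessary (the weight $G=|g(b)|+\int_a^b|g'(y)|\,dy$ enters directly through the first-derivative test, since $\sup_{[a,b]}|g|\le G$ and $\int_a^b|d(1/(f'(x)-m))|\le 1/\mathrm{dist}(m,[\alpha,\beta])$ by monotonicity), though it does no harm.
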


\begin{lem}
\label{S2:L2}
Let $f(x)$ be a real function with $|f'(x)|\leq 1-\theta$ and $f''(x)\neq 0$ on $[a,b]$.
Then
\[\sum_{a<n<b} g(n) \exp(2\pi i f(n)) = \int_a^b g(x) \exp(2\pi if(x)) dx + O(G\theta^{-1}),\]
where $G=|g(b)|+\int_a^b|g'(y)|dy$ and
the implied constant is absolute.
\end{lem}

The following lemma is a modification of Lemma \ref{S2:L1}.

\begin{lem}
\label{S2:L3}
Let $f(x)$ be a real-valued function with $f''(x)>0$ on the
interval $[a,b]$ and let $g$ be any smooth function $[a,b]$, where
$b-a>2$. Then
\begin{align*}
\sum_{a<n<b} g(n) \exp(2\pi if(n)) 
&=\sum_{\alpha-\eta<m<\beta+\eta}\int_a^b g(x) \exp(2\pi i (f(x)-mx)) dx \\
&+ O\left(G_1\bigg(\eta^{-1}+\log \Big(1+\frac{\beta-\alpha}{\eta}\Big)\bigg)+G_2(\beta-\alpha+\eta)\right),
\end{align*}
where
$\alpha= f'(a)\leq f'(b)= \beta$, $\eta > 1 $, $G_1=|g(b)|+\int_a^b|g'(y)|dy$, $G_2=\max\{|g(x)| : x\in [a,a+1]\cup [b-1,b]\}$ and the implied constant is
absolute.
\end{lem}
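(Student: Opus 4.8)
The plan is to run the truncated Poisson summation proof of Lemma~\ref{S2:L1} once more, but with two changes adapted to the new statement: the boundary contributions are followed carefully so that they are weighted by $G_2$, and the Fourier expansion of the sawtooth is cut off at exactly the level dictated by the wanted range of $m$. First I would normalize: subtracting from $f$ a suitable integer multiple of $x$ (harmless, since this does not change $\exp(2\pi if(n))$ at integers $n$, nor $f''$, $g$, $G_1$, $G_2$, nor the two sides of the asserted identity) one may assume $\alpha=f'(a)\in[0,1)$, whence $\beta=f'(b)$ satisfies $\beta\asymp\beta-\alpha$. Starting from the Euler--Maclaurin identity $\sum_{a<n<b}g(n)\exp(2\pi if(n))=\int_a^b g\exp(2\pi if)\,dx-\big[\rho\cdot g\exp(2\pi if)\big]_a^b+\int_a^b\rho(x)\big(g\exp(2\pi if)\big)'(x)\,dx$, with $\rho(x)=\{x\}-\tfrac12$, the boundary term is $O(|g(a)|+|g(b)|)=O(G_2)$, since $b-a>2$ makes $[a,a+1]$ and $[b-1,b]$ disjoint subintervals of $[a,b]$ on which $|g(a)|$ and $|g(b)|$ are attained.

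Next write $\rho=\psi_M+(\rho-\psi_M)$, where $\psi_M(x)=-\sum_{1\le|m|\le M}(2\pi im)^{-1}\exp(2\pi imx)$ and $M=\lceil\beta+\eta\rceil\asymp\beta-\alpha+\eta$. In the $\psi_M$-part I would integrate by parts once inside each Fourier mode to replace the factor $f'$ by $f'-m$; the $g'$-integrals this creates cancel the other $\psi_M$-integral, the boundary terms are $O(G_2)$ by the uniform bound $\big|\sum_{1\le|m|\le M}m^{-1}\exp(2\pi imt)\big|\ll1$ (valid for every $t$), and what survives is $\sum_{|m|\le M}\int_a^b g(x)\exp(2\pi i(f(x)-mx))\,dx$. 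Undoing the normalization, the $m$ lying in $(\alpha-\eta,\beta+\eta)$ reproduce the asserted main sum; the other $m$ with $|m|\le M$ satisfy $|f'(x)-m|\ge\eta$ throughout $[a,b]$, with $|f'(x)-m|$ ranging up to $\asymp\beta-\alpha+\eta$, so by the first-derivative test (Lemma~\ref{S2:L2}) each contributes $O\big(G_1|f'(x)-m|^{-1}\big)$, and the geometric sum of these over the $O(\beta-\alpha)$ such values of $m$ is $O\big(G_1(\eta^{-1}+\log(1+(\beta-\alpha)/\eta))\big)$, which is the first error term.

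What remains is the term $E:=\int_a^b(\rho-\psi_M)(x)\big(g\exp(2\pi if)\big)'(x)\,dx$, and showing $E=O\big(G_1\eta^{-1}+G_2(\beta-\alpha+\eta)\big)$ is the heart of the matter and the step I expect to be the main obstacle. The natural tools are either Vaaler's smoothed-sawtooth approximation, whose Fourier transform is supported in $|m|\le M$ (so that $\rho-\psi_M$ is replaced by something concentrated in $1/M$-neighbourhoods of the integers), or repeated integration by parts in $m$ after inserting $\rho-\psi_M=-\sum_{|m|>M}(2\pi im)^{-1}\exp(2\pi imx)$ and exploiting its conditional convergence; in both cases one must check that the contributions localized near the $O(\beta-\alpha+\eta)$ integers of $[a,b]$ cost only $O(G_2)$ apiece near the ends and are otherwise absorbed, while the ``far'' part is $O(G_1\eta^{-1})$ precisely because $M\asymp\beta-\alpha+\eta$ already exceeds the relevant frequency $\max|f'|\asymp\beta-\alpha$. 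Apart from this, the argument uses only the two van der Corput lemmas and routine integration by parts; the delicacy is entirely in the choice of $M$ and in estimating $E$ without losing the logarithmic saving. (Alternatively one may subdivide $[a,b]$ into $\asymp(\beta-\alpha)/\eta$ pieces on which $f'$ varies by $\le\eta$, apply Lemma~\ref{S2:L1} with $\epsilon=1$ on each, and recombine, reconciling the per-piece $m$-ranges via the first-derivative test --- this is cleaner but loses a factor $\log\eta$ unless the weight satisfies $G_1\asymp G_2$, e.g. when $g$ is monotone, which does hold in the intended application.)
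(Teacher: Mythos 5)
The paper never actually proves this lemma --- it is stated with only the remark that it ``is a modification of Lemma \ref{S2:L1}'' --- so there is no argument of the authors' to compare yours against. Your plan (rerun the truncated Poisson summation proof with the sawtooth cut off at $M\asymp\beta+\eta$, track the boundary terms through $G_2$, and dispose of the out-of-range modes $|m|\le M$ by the first-derivative test) is the natural one, and the parts you carry out are sound: the Euler--Maclaurin setup, the $O(G_2)$ boundary contributions, and the count $\sum_{-M\le m<\alpha-\eta}(\alpha-m)^{-1}\ll\eta^{-1}+\log(1+(\beta-\alpha)/\eta)$ for the discarded modes all check out.

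But the proposal is not a proof, because the term you yourself single out as ``the heart of the matter,'' $E=\int_a^b(\rho-\psi_M)(g\exp(2\pi if))'\,dx$, is never estimated --- you only list candidate tools. This gap is genuine rather than cosmetic. The pointwise bound $|\rho-\psi_M|\ll\min\left(1,(M\Vert x\Vert)^{-1}\right)$ applied to $(g\exp(2\pi if))'=(g'+2\pi if'g)\exp(2\pi if)$ costs a factor of $b-a$ (there are $\sim b-a$ integers in $[a,b]$, each contributing $\gtrsim|f'||g|\log M/M$), and $b-a$ is not controlled by $\beta-\alpha+\eta$ in the lemma's hypotheses. The termwise route, $\rho-\psi_M=-\sum_{|m|>M}(2\pi im)^{-1}\exp(2\pi imx)$ followed by one integration by parts per mode, leaves $\sum_{|m|>M}\int_a^b g\exp(2\pi i(f+mx))\,dx$, and the first-derivative test alone gives $\ll G_1\sum_{m>M}(m-\beta)^{-1}$, which diverges; one must bring in the convexity $f''>0$ quantitatively (a second integration by parts, or a Vaaler-type majorant) and verify that what comes out is $\ll G_1\eta^{-1}+G_2(\beta-\alpha+\eta)$. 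Note also that in your accounting nothing except $E$ can generate the large allowance $G_2(\beta-\alpha+\eta)$ (your boundary terms are only $O(G_2)$), so the entire burden of producing that term --- and of showing that interior values of $|g|$, which enter $E$ through $f'g$ and are bounded only by $G_1$, not by $G_2$ --- falls on precisely the step you have left open. Until $E$ is bounded, the lemma is unproved; your concluding ``alternative'' (subdividing so that $f'$ varies by $\le\eta$ per piece) does not rescue it either, since summing the per-piece errors $G_1^{(j)}\log(\eta+2)$ reintroduces interior values of $|g|$ and an extra $\log\eta$, neither of which the stated error term tolerates.
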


\section{Proof of Theorem}
\label{S3}

We begin by proving an analytic continuation of the Riemann zeta function.

\begin{lem}
\label{S3:L1}
Let $\{u\}$ denote the fractional part of $u$.
For $0<\sigma<1$, 
\[\zeta(s) = 1 - \intzo u^{-s} du + \intoi \{u\} \frac{d}{du} u^{-s}du.\]
\end{lem}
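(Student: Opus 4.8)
The plan is to start from the standard integral representation of $\zeta(s)$ valid for $\sigma>1$, namely the partial-summation identity $\zeta(s) = s\intoi \lfloor u\rfloor u^{-s-1}\,du$ (or equivalently the Abel summation of $\sum n^{-s}$), and then massage it into a form whose right-hand side is manifestly analytic on the strip $0<\sigma<1$. First I would write $\lfloor u\rfloor = u - \{u\}$, so that for $\sigma>1$
\[
\zeta(s) = s\intoi u^{-s}\,du - s\intoi \{u\} u^{-s-1}\,du = \frac{s}{s-1} - s\intoi \{u\} u^{-s-1}\,du.
\]
The second integral converges absolutely and locally uniformly for $\sigma>0$ because $\{u\}$ is bounded, so it defines an analytic function on $\sigma>0$; since $s/(s-1)$ is meromorphic with only a pole at $s=1$, this already gives the continuation to $0<\sigma<1$. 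It remains to check that the claimed expression agrees with this one.

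The key step is therefore an algebraic rewriting of $\frac{s}{s-1}$. I would compute $\intzo u^{-s}\,du = \frac{1}{1-s}$ for $\sigma<1$, so that $1 - \intzo u^{-s}\,du = 1 - \frac{1}{1-s} = \frac{-s}{1-s} = \frac{s}{s-1}$. Next, noting $\frac{d}{du}u^{-s} = -s u^{-s-1}$, the last term in the statement equals $-s\intoi \{u\} u^{-s-1}\,du$, which matches the second term above. Combining these two observations shows the right-hand side of the lemma equals $\frac{s}{s-1} - s\intoi\{u\}u^{-s-1}\,du$, which is exactly $\zeta(s)$ for $0<\sigma<1$ by the argument of the previous paragraph.

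I do not expect any serious obstacle here; the only points requiring a little care are (i) justifying the initial partial-summation identity $\zeta(s) = \frac{s}{s-1} - s\intoi\{u\}u^{-s-1}\,du$ for $\sigma>1$, which follows from Abel summation of $\sum_{n\le N} n^{-s}$ together with letting $N\to\infty$, and (ii) the convergence of $\intzo u^{-s}\,du$, which requires $\sigma<1$ and explains the hypothesis on the strip. One should also remark that the integral $\intoi\{u\}\frac{d}{du}u^{-s}\,du$ is to be read as $\intoi \{u\}\cdot(-s u^{-s-1})\,du$ rather than as a Riemann–Stieltjes integral, so that no boundary terms appear. With these remarks in place the identity is immediate, and the lemma provides the analytic continuation of $\zeta$ to the critical strip that will be used in the sequel.
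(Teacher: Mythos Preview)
Your argument is correct and is essentially the same as the paper's: the paper writes $\zeta(s)=1+\intoi u^{-s}\,d[u]$ for $\sigma>1$, integrates by parts to get $\zeta(s)=1-\frac{1}{1-s}+\intoi\{u\}\frac{d}{du}u^{-s}\,du$, and then identifies $\frac{1}{1-s}=\intzo u^{-s}\,du$ on the strip, which is exactly your computation in slightly different notation. Your explicit remark that $\intzo u^{-s}\,du$ converges only for $\sigma<1$ is a useful clarification.
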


\begin{proof}
For $\sigma>1$,
\[\zeta(s) = 1+\intoi u^{-s}d[u] = 1+\intoi u^{-s}du-\intoi u^{-s}d\{u\}.\]
An integration by parts yields an analytic continuation of $\zeta(s)$ into the half-plane $\sigma>0$, that is,
\[\zeta(s) = 1-\frac{1}{1-s}+\intoi \{u\} \frac{d}{du} u^{-s}du.\]
Since $\sigma>0$, we have $\frac{1}{1-s} = \intzo u^{-s} du$.
\end{proof}

We estimate the mean value of the sampling of the Riemann zeta function.

\begin{lem}
\label{S3:L2}
For all $N\in\mathbb{N}$ and all sufficiently large $t$, 
\[\mathbb{E} \zeta(1/2+iX_t) = 1 + O(t^{-N}).\]
\end{lem}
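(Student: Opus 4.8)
The plan is to start from the analytic continuation of Lemma \ref{S3:L1} with $s = 1/2 + iX_t$, so that
\[
\zeta(1/2+iX_t) = 1 - \intzo u^{-1/2-iX_t}\,du + \intoi \{u\}\,\frac{d}{du}u^{-1/2-iX_t}\,du,
\]
and then take expectations term by term, justified by Fubini together with the integrability built into the gamma density. The constant term contributes exactly $1$, which is the main term we are after, so everything reduces to showing that the expectations of the two integrals are $O(t^{-N})$. Writing $u^{-iX_t} = e^{-iX_t\log u}$ and using the characteristic function formula \eqref{S3:E4}, namely $\mathbb{E}(e^{iuX_t}) = (1-iu)^{-t}$, we get for the first integral
\[
\mathbb{E}\intzo u^{-1/2-iX_t}\,du = \intzo u^{-1/2}(1+i\log u)^{-t}\,du,
\]
and a similar expression, after one integration by parts in $u$ (to move the derivative off $u^{-s}$ onto $\{u\}$, or rather to rewrite $\frac{d}{du}u^{-s} = -s u^{-s-1}$), for the tail integral:
\[
\mathbb{E}\intoi \{u\}\,\frac{d}{du}u^{-1/2-iX_t}\,du = -\intoi \{u\}\,(1/2+iX_t\text{-slot})\cdots
\]
— more precisely one arrives at $-(1/2)\intoi \{u\} u^{-3/2}(1+i\log u)^{-t}\,du$ plus a comparable term, all of which are absolutely convergent for $t$ large.

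Next I would estimate $|(1+i\log u)^{-t}|$. Since $|1+i\log u|^2 = 1 + (\log u)^2 \ge 1$, we have $|(1+i\log u)^{-t}| \le (1+(\log u)^2)^{-t/2}$, and this is already small once $|\log u|$ is bounded away from $0$, but it is only $\le 1$ near $u=1$, so a crude bound does not by itself beat $t^{-N}$. The fix is to split each $u$-integral at, say, $|\log u| = \delta$ for a suitable $\delta$; on the far range $(1+(\log u)^2)^{-t/2}$ decays exponentially in $t$ (since it is $\le (1+\delta^2)^{-t/2}$ times an integrable tail), which is far better than $t^{-N}$; on the near range $|\log u| \le \delta$ we exploit the cancellation in the oscillatory integral $\int u^{-1/2}e^{-iX_t\log u}\,du$ directly rather than through the absolute value — but it is cleaner to instead perform repeated integration by parts in the original $u$-variable \emph{before} taking expectations, as the author's remarks ("It is easy to estimate the first moment by applying repeated integration by parts") suggest. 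Concretely, integrating $\intzo u^{-s}\,du$ by parts $N$ times produces a main boundary term that is a rational function of $s$ with denominator a product $(1-s)(2-s)\cdots$ of size $\gg |s|^N \gg |X_t|^N$, plus a remainder of the same shape; taking $\mathbb{E}$ and using $\mathbb{E}|X_t|^{-N} \ll t^{-N}$ (which follows from the gamma density, whose negative moments of order $<at = t$ are finite and of size $t^{-N}$ for large $t$) gives the claim. The tail integral is handled identically since $\frac{d}{du}u^{-s}$ carries an explicit factor of $s$.

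So the real steps are: (i) substitute the continuation formula and interchange $\mathbb{E}$ with the $u$-integrals (Fubini, using $\mathbb{E}|u^{-iX_t}| = 1$ and the absolute convergence of the $u$-integrals for $\sigma = 1/2$); (ii) reduce to estimating $\mathbb{E}$ of the two integrals, the constant $1$ being the main term; (iii) integrate by parts $N$ times in $u$ in each integral, gaining a factor $s^{-1} = (1/2+iX_t)^{-1}$ each time; (iv) take expectations and bound $\mathbb{E}|1/2+iX_t|^{-N} \ll t^{-N}$ from the gamma density. The main obstacle is step (iv): one must check that the gamma law really does have negative moments controlled by $t^{-N}$ — this is where $X_t$ being supported on $(0,\infty)$ with density $\sim x^{t-1}e^{-x}/\Gamma(t)$ matters, since near $x=0$ the density vanishes to high order, so $\intzi x^{-N}\frac{x^{t-1}e^{-x}}{\Gamma(t)}\,dx = \frac{\Gamma(t-N)}{\Gamma(t)} \asymp t^{-N}$ for $t > N$; and one must also make sure the boundary terms from the repeated integration by parts are genuinely of size $|s|^{-N}$ uniformly, with no hidden growth in $u$. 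Everything else is routine bookkeeping.
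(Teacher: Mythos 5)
Your overall frame (substitute the continuation formula, swap $\mathbb{E}$ with the $u$-integrals via \eqref{S3:E4}, then kill what remains by repeated integration by parts) is the right genre, but the plan of estimating the two integrals \emph{separately} to $O(t^{-N})$ cannot work: each one is genuinely of size $\asymp t^{-1}$, and only their combination is small. Indeed $\intzo u^{-1/2-iX_t}\,du = \frac{1}{1/2-iX_t}$ exactly, so its expectation has imaginary part $\mathbb{E}\,\frac{X_t}{1/4+X_t^2} \asymp \mathbb{E}\,X_t^{-1} = \frac{1}{t-1}$, which already refutes $O(t^{-2})$. There is no further integration by parts to perform on $\intzo u^{-s}\,du$ and no product denominator $(1-s)(2-s)\cdots$ to be extracted --- the integral is exactly one power of $s^{-1}$, so your step (iii) has nothing to iterate. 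The same obstruction sits at the lower endpoint $u=1$ of the tail integral: $|1+i\log u|=1$ there, so boundary terms at $u=1$ carry no decay, and each integration by parts buys only a single factor of $t^{-1}$ before such a boundary term reappears. The $t^{-1}$ contributions of the two pieces cancel against each other, and exploiting that cancellation is the actual content of the lemma.

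The paper's proof does exactly this. After taking expectations it truncates the tail integral to $[1,2]$ (the part over $[2,\infty)$ is $O(e^{-ct})$ since $|1+i\log u|^{-t}\le(1+\log^2 2)^{-t/2}$ there), integrates by parts once --- the boundary term at $u=1$ vanishes because $\{u\}=u-1$ does --- obtaining $-\int_1^2 u^{-1/2}(1+i\log u)^{-t}\,du$, and then \emph{merges} this with the $[0,1]$ piece into the single integral $\int_0^2 u^{-1/2}(1+i\log u)^{-t}\,du$. Only now does repeated integration by parts succeed: using $(1+i\log u)^{-t}=\frac{iu}{t-1}\frac{d}{du}(1+i\log u)^{-t+1}$ one gains a deterministic factor $\frac{1}{t-k}$ at each step, and the boundary contributions are harmless ($0$ at $u=0$, $O(e^{-ct})$ at $u=2$ since $|1+i\log 2|>1$). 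Your step (iv), $\mathbb{E}|X_t|^{-N}=\Gamma(t-N)/\Gamma(t)\asymp t^{-N}$, is a correct computation but is never needed, because no individual piece ever acquires $N$ powers of $s^{-1}$. To repair your argument you must recombine the two integrals across $u=1$ before estimating anything.
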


\begin{proof}
By Lemma \ref{S3:L1} and the fact $\mathbb{E} u^{-iX_t}=(1+i\log u)^{-t}$, 
\[\mathbb{E} \zeta(1/2+iX_t) = 1 - \intzo u^{-1/2} (1+i\log u)^{-t} du + \intoi \{u\} \frac{d}{du} \left(u^{-1/2} (1+i\log u)^{-t}\right) du.\]
The last integral becomes
\begin{align*}
&\intoi \{u\} \frac{d}{du} \left(u^{-1/2} (1+i\log u)^{-t}\right) du \\
&= \int_1^2 (u-1) \frac{d}{du} \left(u^{-1/2} (1+i\log u)^{-t}\right) du + O(e^{-ct}) \\
&=-\int_1^2u^{-1/2}(1+i\log u)^{-t} du + O(e^{-ct})
\end{align*}
for some positive real number $c$.
Therefore
\[\mathbb{E} \zeta(1/2+iX_t) = 1-\int_0^2 u^{-1/2}(1+i\log u)^{-t} du + O(e^{-ct}).\]
Since
\[(1+i\log u)^{-t} = \frac{iu}{t-1} \frac{d}{du}(1+i\log u)^{-t+1},\]
an $N$-fold integration by parts gives
\begin{align*}
&\int_0^2 u^{-1/2}(1+i\log u)^{-t} du \\
&= \frac{(i/2)^N}{(t-1)(t-2)\cdots(t-N)} \int_0^2 u^{-1/2}(1+i\log u)^{-t+N}du + O(e^{-ct}).
\end{align*}
The result follows.
\end{proof}

We now ready to prove our main theorem.

\textbf{[Step 1]}
We claim that 
\begin{equation}
\mathbb{E} |\zeta(1/2+iX_t)|^2 = t(t+1) \iint_R \{v\} v^{-3/2} \{u\} u^{-3/2} \left(1+i\log\frac{u}{v}\right)^{-t-2} dudv + O(1),
\end{equation}
where 
\begin{equation}
R(t)=\Set{(u,v) : 1<u<t^4, 1<v<t^4, \Abs{\log\frac{u}{v}} < 2\sqrt{\frac{\log t}{t}}}.
\end{equation}

To see this, using Lemma \ref{S3:L2} we have 
\[\mathbb{E}|\zeta(1/2+iX_t)|^2 = \mathbb{E}|\zeta(1/2+iX_t)-1|^2+O(1).\]
Using Lemma \ref{S3:L1} we expand 
\begin{align*}
\mathbb{E}|\zeta(1/2+iX_t)-1|^2
&=\mathbb{E}\left|\intzo u^{-1/2-iX_t} du - \intoi \{u\} \frac{d}{du} u^{-1/2-iX_t}du\right|^2 \\
&=A_1(t)-2\Re A_2(t)+A_3(t),
\end{align*}
where
\begin{align*}
A_1(t)&=\intzo\intzo u^{-1/2} v^{-1/2} \mathbb{E} (u/v)^{-iX_t} dudv, \\
A_2(t)&=\intoi \{v\}\frac{d}{dv}\left(v^{-1/2}\intzo u^{-1/2} \mathbb{E}(u/v)^{-iX_t} du\right)dv, \\
A_3(t)&=\intoi \{v\} \frac{d}{dv} \left(v^{-1/2} \intoi \{u\} \frac{d}{du} u^{-1/2} \mathbb{E} (u/v)^{-iX_t} du\right) dv.
\end{align*}
It is easy to see that $A_1(t)+A_2(t) = O(1)$. 
Indeed, 
\[\Abs{\mathbb{E} (u/v)^{-iX_t}} = \Abs{\left(1+i\log\frac{u}{v}\right)^{-t}} \le 1,\]
and a change of variables shows 
\begin{align*}
A_2(t)
&=\intoi \{v\}\frac{d}{dv}\left(v^{-1/2}\intzo u^{-1/2}\left(1+i\log\frac{u}{v}\right)^{-t}du\right)dv \\
&=\intoi \{v\}\frac{d}{dv}\left(\int_0^{1/v} u^{-1/2}(1+i\log u)^{-t}du\right)dv \\
&=-\intoi\{v\} v^{-3/2}(1-i\log v)^{-t}dv \\
&= O(1).
\end{align*}
A direct calculation shows 
\begin{align*}
A_3(t)
&=\intoi \{v\} \frac{d}{dv} \left(v^{-1/2} \intoi \{u\} \frac{d}{du} \left(u^{-1/2} \left(1+i\log\frac{u}{v}\right)^{-t}\right) du\right) dv \\
&=t(t+1) \intoi \intoi \{v\} v^{-3/2} \{u\} u^{-3/2} \left(1+i\log\frac{u}{v}\right)^{-t-2} du dv+O(1).
\end{align*}
We observe that 
\[\int_{t^4}^\infty x^{-3/2} dx \ll t^{-2}\]
and that if $\Abs{\log\frac{u}{v}} \ge 2\sqrt{\frac{\log t}{t}}$ then
\[\Abs{\left(1+i\log\frac{u}{v}\right)^{-t-2}} \le \exp\left(-\frac{t}{2}\log\left(1+4\frac{\log t}{t}\right)\right) \ll t^{-2}.\]
So we can reduce the domain of integration and this proves the claim.

\textbf{[Step 2]}
We claim that 
\begin{equation}
t(t+1) \iint_R \{v\} v^{-3/2} \{u\} u^{-3/2} \left(1+i\log\frac{u}{v}\right)^{-t-2} dudv = \sum_{(m,n)\in R(t)} \left(\widetilde{F}_{m,n}(t) - \widetilde{G}_{m,n}(t)\right) + O(1),
\end{equation}
where
\begin{equation}
\widetilde{F}_{m,n}(t) = \frac{1}{\sqrt{n+1}} \frac{1}{\sqrt{m+1}} \exp\left(-t\left(i\log{\frac{m+1}{n+1}} + \frac{1}{2}\log^2{\frac{m+1}{n+1}}\right)\right)
\end{equation}
and 
\begin{equation}
\widetilde{G}_{m,n}(t) = \frac{1}{\sqrt{m+1}} \int_n^{n+1} v^{-1/2} \exp\left(-t\left(i\log{\frac{m+1}{v}}+\frac{1}{2}\log^2{\frac{m+1}{v}}\right)\right) dv.
\end{equation}

To see this, we write 
\[t(t+1) \iint_R \{v\} v^{-3/2} \{u\} u^{-3/2} \left(1+i\log\frac{u}{v}\right)^{-t-2} dudv =  t(t+1) \sum_{(m,n)\in R(t)} C_{m,n}(t)+O(1)\]
where $m,n \in \mathbb{N}$ and 
\[C_{m,n}(t) = \int_n^{n+1} \frac{v-n}{v^{3/2}} \int_m^{m+1} \frac{u-m}{u^{3/2}} \left(1+i\log\frac{u}{v}\right)^{-t-2} dudv.\]
Applying an integration by parts to the inner integral yields
\begin{align*}
C_{m,n}(t)
&= \frac{i}{t+1} \frac{1}{\sqrt{m+1}} \int_n^{n+1} \frac{v-n}{v^{3/2}} \left(1+i\log\frac{m+1}{v}\right)^{-t-1} dv \\
&\quad -\frac{i}{t+1} \int_n^{n+1} \frac{v-n}{v^{3/2}} \int_m^{m+1} u^{-1/2} \left(1+i\log\frac{u}{v}\right)^{-t-1} dudv \\
&\quad +\frac{i/2}{t+1} \int_n^{n+1} \frac{v-n}{v^{3/2}} \int_m^{m+1} \frac{u-m}{u^{3/2}} \left(1+i\log\frac{u}{v}\right)^{-t-1} dudv.
\end{align*}
If we denote 
\begin{align*}
D_{m,n}(t) &= \frac{1}{\sqrt{m+1}} \int_n^{n+1} \frac{v-n}{v^{3/2}} \left(1+i\log\frac{m+1}{v}\right)^{-t-1} dv \\
E_{m,n}(t) &= \int_n^{n+1} \frac{v-n}{v^{3/2}} \int_m^{m+1} u^{-1/2} \left(1+i\log\frac{u}{v}\right)^{-t-1} dudv,
\end{align*}
then we rewrite the above identity as
\[C_{m,n}(t) = \frac{i}{t+1} D_{m,n}(t) - \frac{i}{t+1} E_{m,n}(t) + \frac{i/2}{t+1} C_{m,n}(t-1).\]
Because we can iterate this relation, we have 
\begin{align*}
&t(t+1) \iint_R \{v\} v^{-3/2} \{u\} u^{-3/2} \left(1+i\log\frac{u}{v}\right)^{-t-2} dudv \\
&= it \sum_{(m,n)\in R(t)} \left(D_{m,n}(t) - E_{m,n}(t)\right) + O(1).
\end{align*}

Now, we show that $it \sum_{(m,n)\in R(t)} E_{m,n}(t)$ is an error term.
By making the change of variables we have 
\begin{align*}
\sum_{(m,n)\in R(t)} E_{m,n}(t)
&\ll \sum_{1\le n < t^4} \int_n^{n+1} \frac{v-n}{v^{3/2}} \int_{\abs{\log\frac{u}{v}} < 2\sqrt{\frac{\log t}{t}}} u^{-1/2} \left(1+i\log\frac{u}{v}\right)^{-t-1} dudv \\
&= \sum_{1\le n < t^4} \int_n^{n+1} \frac{v-n}{v} \int_{\exp(-2\sqrt{\frac{\log t}{t}})}^{\exp(2\sqrt{\frac{\log t}{t}})} u^{-1/2} \left(1+i\log u\right)^{-t-1} dudv.
\end{align*}
Using the identity
\[(1+i\log u)^{-t-1} = \frac{iu}{t} \frac{d}{du}(1+i\log u)^{-t},\]
we perform an integration by parts to obtain that 
\[\int_{\exp(-2\sqrt{\frac{\log t}{t}})}^{\exp(2\sqrt{\frac{\log t}{t}})} u^{-1/2} \left(1+i\log u\right)^{-t-1} du = O(t^{-3}).\]
So we have 
\[\sum_{(m,n)\in R(t)} E_{m,n}(t) \ll \int_1^{t^4} \frac{1}{v} t^{-3} dv \ll t^{-3} \log t.\]

Similarly, an integration by parts yields
\[D_{m,n}(t) = -\frac{i}{t} F_{m,n}(t) + \frac{i}{t} G_{m,n}(t) - \frac{i/2}{t} D_{m,n}(t-1),\]
where
\begin{align*}
F_{m,n}(t) &= \frac{1}{\sqrt{n+1}} \frac{1}{\sqrt{m+1}} \left(1+i\log\frac{m+1}{n+1}\right)^{-t} \\
G_{m,n}(t) &= \frac{1}{\sqrt{m+1}} \int_n^{n+1} v^{-1/2} \left(1+i\log\frac{m+1}{v}\right)^{-t} dv.
\end{align*}
From this identity, we then obtain
\[A_3(t) = \sum_{(m,n)\in R} \left(F_{m,n}(t) - G_{m,n}(t)\right) + O(1).\]
Finally, in the region $R(t)$, we can approximate the exponential function $(1+z)^{-t}$, that is, for $|z| < 2\sqrt{\frac{\log t}{t}}$,
\[(1+z)^{-t} = \exp(-t(z-z^2/2)) (1+O(t|z|^3)).\]
This proves the claim.

\textbf{[Step 3]}
We claim that 
\begin{equation}
\sum_{t<n<t^4} \sum_{m\in R(n,t)} \left(\widetilde{F}_{m,n}(t) - \widetilde{G}_{m,n}(t)\right) = O(1),
\end{equation}
where
\[R(n,t)=\Set{m\in\mathbb{N} : \Abs{\log\frac{m+1}{n+1}} < 2\sqrt{\frac{\log t}{t}}}.\]

We only prove the sum for $\widetilde{F}_{m,n}(t)$ because the case for $\widetilde{G}_{m,n}(t)$ is almost the same.
We have 
\[\sum_{t<n<t^4} \sum_{m\in R(n,t)} \widetilde{F}_{m,n}(t) \ll \sum_{t<n<t^4} \frac{1}{\sqrt{n}} \sum_{m\in R(n,t)} \frac{1}{\sqrt{m}} \exp\left(-\frac{t}{2}\log^2\frac{m}{n}\right) \exp\left(-ti\log\frac{m}{n}\right).\]
If we set $\eta=\log t$ in Lemma \ref{S2:L3}, then 
\begin{align*}
&\sum_{m\in R(n,t)} \frac{1}{\sqrt{m}} \exp\left(-\frac{t}{2}\log^2\frac{m}{n}\right) \exp\left(-ti\log\frac{m}{n}\right) \\
&=\sum_{-\log t<k<\log t} \int_{n\exp(-2\sqrt{\frac{\log t}{t}})}^{n\exp(2\sqrt{\frac{\log t}{t}})} x^{-1/2} \exp\left(-\frac{t}{2}\log^2\frac{x}{n}\right) \exp\left(-ti\log\frac{x}{n}-2\pi ikx\right) dx \\
&\quad+ O\left(\frac{1}{\sqrt{n}\log t} + \frac{\log t}{\sqrt{n}t^2}\right).
\end{align*}
It is easy to see that 
\[\sum_{t<n<t^4} \frac{1}{\sqrt{n}} \left(\frac{1}{\sqrt{n}\log t} + \frac{\log t}{\sqrt{n}t^2}\right) \ll 1.\]
In order to estimate the main term, we change variables to get
\begin{align*}
&\sum_{-\log t<k<\log t} \int_{n\exp(-2\sqrt{\frac{\log t}{t}})}^{n\exp(2\sqrt{\frac{\log t}{t}})} x^{-1/2} \exp\left(-\frac{t}{2}\log^2\frac{x}{n}\right) \exp\left(-ti\log\frac{x}{n}-2\pi ikx\right) dx \\
&= \sqrt{n} \sum_{-\log t<k<\log t} \int_{-2\sqrt{\frac{\log t}{t}}}^{2\sqrt{\frac{\log t}{t}}} \exp(x/2) \exp\left(-t(ix+x^2/2)-2\pi ikne^x\right) dx.
\end{align*}
Using the identity
\begin{align*}
&\exp\left(-t(ix+x^2/2)-2\pi ikne^x\right) \\
&= -\frac{1}{t(i+x)+2\pi ikne^x} \frac{d}{dx} \exp\left(-t(ix+x^2/2)-2\pi ikne^x\right),
\end{align*}
we can integrate by parts to obtain 
\begin{align*}
&\int_{-2\sqrt{\frac{\log t}{t}}}^{2\sqrt{\frac{\log t}{t}}} \exp(x/2) \exp\left(-t(ix+x^2/2)-2\pi ikne^x\right) dx \\
&=-\left[\frac{\exp(x/2)}{t(i+x)+2\pi ikne^x} \exp\left(-t(ix+x^2/2)-2\pi ikne^x\right)\right]_{-2\sqrt{\frac{\log t}{t}}}^{2\sqrt{\frac{\log t}{t}}} \\
&\quad+\int_{-2\sqrt{\frac{\log t}{t}}}^{2\sqrt{\frac{\log t}{t}}} \frac{d}{dx} \left(\frac{\exp(x/2)}{t(i+x)+2\pi ikne^x}\right) \exp\left(-t(ix+x^2/2)-2\pi ikne^x\right) dx.
\end{align*}
Notice that for all sufficiently large $t$, $2\sqrt{\frac{\log t}{t}}<\frac{1}{10}$.
If $k=0$, then $\frac{1}{t(i+x)}\ll \frac{1}{t}$.
If $k\neq 0$, then we have $|2\pi kne^x| \ge 2t$ and so we have $\frac{1}{t(i+x)+2\pi ikne^x}\ll \frac{1}{kn}$.
So the function $\frac{\exp(x/2)}{i+x+2\pi i kne^x/t}$ and its derivatives are bounded by a constant uniformly on the domain of integration.
Hence we can repeat the integration by parts so that the main term comes from the boundary values
\[\left[\frac{\exp(x/2)}{t(i+x)+2\pi ikne^x} \exp\left(-t(ix+x^2/2)-2\pi ikne^x\right)\right]_{-2\sqrt{\frac{\log t}{t}}}^{2\sqrt{\frac{\log t}{t}}} \le \min\Set{\frac{1}{t^3},\frac{1}{knt^2}}.\]
Therefore the main term becomes
\[\sum_{t<n<t^4} \frac{1}{\sqrt{n}} \sum_{0<k<\log t} \frac{1}{knt^2} \ll \frac{\log\log t}{t^2\sqrt{t}}.\]
This proves the claim.

\textbf{[Step 4]}
It is easy to see that 
\begin{equation}
\sum_{1\le n\le t} \left(\widetilde{F}_{n,n}(t) - \widetilde{G}_{n,n}(t)\right) = \log t+O(1).
\end{equation}
Indeed, we have 
\[\sum_{1\le n\le t} \widetilde{F}_{n,n}(t) = \sum_{1\le n\le t} \frac{1}{n+1}\]
and 
\begin{align*}
\sum_{1\le n\le t} \widetilde{G}_{n,n}(t)
&=\sum_{1\le n\le t} \frac{1}{\sqrt{n+1}} \int_n^{n+1} v^{-1/2} \exp\left(-t\left(i\log{\frac{n+1}{v}}+\frac{1}{2}\log^2{\frac{n+1}{v}}\right)\right) dv \\
&=\sum_{1\le n\le t} \int_1^{1+1/n} v^{-3/2} \exp\left(-t\left(i\log v+\frac{1}{2}\log^2v\right)\right) dv.
\end{align*}
An integration by parts shows that the last sum is bounded by a constant.

Because the summing on $\Set{m\in\mathbb{N} : -2\sqrt{\frac{\log t}{t}} < \log\frac{m+1}{n+1} < 0}$ has the same estimates, we need to estimate the sum
\[\sum_{1\le n\le t} \sum_{m\in R_+(n,t)} \left(\widetilde{F}_{m,n}(t) - \widetilde{G}_{m,n}(t)\right),\]
where 
\[R_+(n,t)=\Set{m\in\mathbb{N} : 0 < \log\frac{m+1}{n+1} < 2\sqrt{\frac{\log t}{t}}}.\]
Moreover, since $\widetilde{G}_{m,n}(t)$ is Riemann integrable, we may consider the sum
\[\sum_{1\le n\le t} \sum_{m\in R_+^\delta(n,t)} \frac{1}{\sqrt{m+1}} \frac{1}{\sqrt{n+\delta}} \exp\left(-t\left(i\log{\frac{m+1}{n+\delta }} + \frac{1}{2}\log^2{\frac{m+1}{n+\delta}}\right)\right),\]
where $0<\delta\le 1$ and
\[R_+^\delta(n,t)=\Set{m\in\mathbb{N} : 1 < \frac{m+1}{n+\delta} < 1+2\sqrt{\frac{\log t}{t}}}.\]
Finally, we observe that the Taylor expansion shows 
\begin{align*}
&\sum_{0 < k-\delta \le 2(n+\delta)\sqrt{\frac{\log t}{t}}} \frac{1}{\sqrt{n+k}} \exp\left(-t\left(i\log\left(1+\frac{k-\delta}{n+\delta}\right)+\log^2\left(1+\frac{k-\delta}{n+\delta}\right)\right)\right)\\
&=\sum_{0 < k-\delta \le 2(n+\delta)\sqrt{\frac{\log t}{t}}} \frac{1}{\sqrt{n+k}} \exp\left(-\frac{t(k-\delta)^2}{(n+\delta)^2}\right) \exp\left(-it\left(\frac{k-\delta}{n+\delta}-\frac{(k-\delta)^2}{2(n+\delta)^2}\right)\right) \big(1+O(t^{-\frac{1}{2}})\big).
\end{align*}

\textbf{[Step 5]}
We now consider the sum
\begin{equation}
\sum_{\frac{1}{2}\sqrt{\frac{t}{\log t}} < n+\delta < \sqrt{t\log t}} \frac{1}{\sqrt{n+\delta}} \sum_{0 < k-\delta \le 2(n+\delta)\sqrt{\frac{\log t}{t}}} \frac{1}{\sqrt{n+k}} \exp\left(-\frac{t(k-\delta)^2}{(n+\delta)^2}\right) \exp\left(-it\left(\frac{k-\delta}{n+\delta}-\frac{(k-\delta)^2}{2(n+\delta)^2}\right)\right). 
\end{equation}
We claim that the order of the above sum is $O(\sqrt{\log t} \log\log t)$.

To see this, we apply Lemma \ref{S2:L1} so that the inner oscillatory sum becomes
\begin{align*}
&\sum_{\alpha-1/2<r<\beta+1/2} \int_\delta^{\delta+2(n+\delta)\sqrt{\frac{\log t}{t}}} \frac{1}{\sqrt{n+x}} \exp\left(-\frac{t(x-\delta)^2}{(n+\delta)^2}\right) \exp\left(-it\left(\frac{x-\delta}{n+\delta}-\frac{(x-\delta)^2}{2(n+\delta)^2}\right)-2\pi irx\right) dx \\
&\quad + O\big(G\big(\log (\beta-\alpha+2)\big)\big),
\end{align*}
where $\alpha=-\frac{t}{2\pi(n+\delta)}$ and $\beta=\frac{t}{2\pi(n+\delta)} \left(2\sqrt{\frac{\log t}{t}}-1\right)$.
It is easy to see that the error term is dominated by
\[G\big(\log (\beta-\alpha+2)\big) \ll \frac{\log\log t}{\sqrt{n+\delta}},\]
and that the main term is dominated by
\begin{align*}
&\sum_{\alpha-1/2<r<\beta+1/2} \int_\delta^{\delta+2(n+\delta)\sqrt{\frac{\log t}{t}}} 
\frac{1}{\sqrt{n+x}} \exp\left(-\frac{t(x-\delta)^2}{(n+\delta)^2}\right) dx \\
&\ll \frac{1}{\sqrt{n+\delta}}\sum_{\alpha-1/2<r<\beta+1/2} \frac{n+\delta}{\sqrt{t}} \ll \frac{\sqrt{\log t}}{\sqrt{n+\delta}}
\end{align*}
since $\beta-\alpha \ll \frac{\sqrt{t\log t}}{n+\delta}$.
Therefore
\[\sum_{\frac{1}{2}\sqrt{\frac{t}{\log t}} < n+\delta < \sqrt{t\log t}} \frac{1}{\sqrt{n+\delta}} \frac{\sqrt{\log t}}{\sqrt{n+\delta}} \ll \sqrt{\log t}\log\log t.\]

\textbf{[Step 6]}
We now consider the sum
\begin{equation}
\sum_{\sqrt{t\log t} \le n+\delta<t} \sum_{0 < k-\delta \le 2(n+\delta)\sqrt{\frac{\log t}{t}}} \frac{1}{\sqrt{n+\delta}} \frac{1}{\sqrt{n+k}} \exp\left(-\frac{t(k-\delta)^2}{(n+\delta)^2}\right) \exp\left(-it\left(\frac{k-\delta}{n+\delta}-\frac{(k-\delta)^2}{2(n+\delta)^2}\right)\right).
\end{equation}
We claim that the order of the above sum is $O(\log\log t)$.

We denote 
\[f_k^t(x)=\frac{t}{2\pi} \left(\frac{(k-\delta)^2}{2(x+\delta)^2}-\frac{k-\delta}{x+\delta}\right)\]
and
\[g_k^t(x)=\frac{1}{\sqrt{x+\delta}} \frac{1}{\sqrt{x+k}} \exp\left(-\frac{t(k-\delta)^2}{(x+\delta)^2}\right).\]
We then change the order of summation to obtain that 
\begin{align*}
&\sum_{\sqrt{t\log t} \le n+\delta<t} \sum_{0 < k-\delta \le 2(n+\delta)\sqrt{\frac{\log t}{t}}} g_k^t(n) \exp(2\pi if_k^t(n)) \\
&=\sum_{0<k-\delta\le2\log t} \sum_{\sqrt{t\log t}<n+\delta<t} g_k^t(n) \exp(2\pi if_k^t(n)) \\
&\quad+\sum_{2\log t<k-\delta<2\sqrt{t\log t}} \sum_{\frac{1}{2}\sqrt{\frac{t}{\log t}}(k-\delta)<n+\delta<t} g_k^t(n) \exp(2\pi if_k^t(n)) + O(t^{-1/2}\sqrt{\log t}) \\
&=:S_1(t) + S_2(t) + O(t^{-1/2}\sqrt{\log t}).
\end{align*}

We shall show that $S_2(t)=O(\log \log t)$.
By Lemma \ref{S2:L2} 
\[\sum_{\frac{1}{2}\sqrt{\frac{t}{\log t}}(k-\delta)<n+\delta<t} g_k^t(n) \exp(2\pi if_k^t(n)) = \int_{\frac{1}{2}\sqrt{\frac{t}{\log t}}(k-\delta)-\delta}^{t-\delta} g_k^t(x) \exp(2\pi if_k^t(x)) dx + O(G),\]
where 
\begin{align*}
\alpha &= \frac{d}{dx} f_k^t\left(t-\delta\right) = \frac{k-\delta}{2\pi t}\left(1-\frac{k-\delta}{t}\right), \\
\beta &= \frac{d}{dx} f_k^t\left(\frac{1}{2}\sqrt{\frac{t}{\log t}}(k-\delta)-\delta\right) = \frac{2\log t}{\pi(k-\delta)} \left(1-2\sqrt{\frac{\log t}{t}}\right), \\
G&=\max\left(g\left(\frac{1}{2}\sqrt{\frac{t}{\log t}}(k-\delta)-\delta\right), g\left(\sqrt{t}(k-\delta)-\delta\right), g(t-\delta)\right) \ll \frac{1}{\sqrt{t}(k-\delta)},
\end{align*}
since we have $-1<\alpha-\epsilon<\beta+\epsilon<1$ for $2\log t<k-\delta<2\sqrt{t\log t}$.
By the change of variables
\begin{align*}
&\int_{\frac{1}{2}\sqrt{\frac{t}{\log t}}(k-\delta)-\delta}^{t-\delta} g_k^t(x) \exp(2\pi if_k^t(x)) dx \\
&\ll \int_{\frac{\sqrt{t}(k-\delta)}{3\sqrt{\log t}}-\delta}^{\frac{t}{\pi}-\delta}\frac{1}{x+\delta}\exp\left(-it\left(\frac{k-\delta}{x+\delta}-
\frac{(k-\delta)^2}{2(x+\delta)^2}\right)\right) \exp\left(-\frac{t(k-\delta)^2}{(x+\delta)^2}\right)dx\\
&=\int_{\frac{\pi}{t}}^{\frac{3\sqrt{\log t}}{\sqrt{t}(k-\delta)}} \frac{1}{x} \exp\left(-it\left((k-\delta)x-\frac{(k-\delta)^2x^2}{2}\right)\right) \exp\left(-t(k-\delta)^2x^2\right) dx\\
\end{align*}
Using 
\begin{align*}
&\exp\left(-it\left((k-\delta)x-\frac{(k-\delta)^2x^2}{2}\right)-t(k-\delta)^2x^2\right) \\
&= -\frac{1}{2t(k-\delta)^2x+it(k-\delta)(1-(k-\delta)x)} \frac{d}{dx} \exp\left(-it\left((k-\delta)x-\frac{(k-\delta)^2x^2}{2}\right)-t(k-\delta)^2x^2\right)
\end{align*}
we integrate by parts and then estimate the resulting terms.
We did this type of estimates many times so we omit the details and give the results.
If $k-\delta<\sqrt{\frac{t}{\log t}}$, then it is dominated by 
\[\frac{\exp(-\pi (k-\delta)i)}{\pi k}+O\left(\frac{1}{(k-\delta)\log t}\right)+O\left(\frac{1}{\sqrt{t\log t}}\right),\]
and if $k-\delta\ge\sqrt{\frac{t}{\log t}}$, then it is dominated by $O\left(\frac{1}{k-\delta}\right)+O\left(\frac{1}{\sqrt{t\log t}}\right)$. 
Therefore,
\begin{align*}
S_2(t) 
&\ll \sum_{2\log t<k-\delta<\sqrt{\frac{t}{\log t}}} \left(\frac{\exp(i\pi \delta)}{\pi}\frac{ (-1)^k }{k-\delta}+\frac{1}{(k-\delta)\log t}+\frac{1}{\sqrt{t\log t}}\right) \\
&\quad+ \sum_{\sqrt{\frac{t}{\log t}}<k-\delta<2\sqrt{t\log t}} \left(\frac{1}{k-\delta}+\frac{1}{\sqrt{t\log t}}\right)=O(\log \log t).
\end{align*}

By Lemma \ref{S2:L2} 
\[\sum_{\sqrt{t\log t}<n+\delta<t} g_k^t(n) \exp(2\pi if_k^t(n)) = \int_{\sqrt{t\log t}-\delta}^{t-\delta} g_k^t(x) \exp(2\pi if_k^t(x)) dx  + O(G),\]
where 
\begin{align*}
\alpha &= \frac{d}{dx} f_k^t\left(t-\delta\right) = \frac{k-\delta}{2\pi t}\left(1-\frac{k-\delta}{t}\right), \\
\beta &= \frac{d}{dx} f_k^t\left(\sqrt{t\log t}-\delta\right) = \frac{k-\delta}{2\pi\log t} \left(1-\frac{k-\delta}{\sqrt{t\log t}}\right), \\
G &= \max\left(g\left(\sqrt{t\log t}-\delta\right),g\left(\sqrt{t}(k-\delta)-\delta\right),g(t-\delta)\right) \ll \max\left(\frac{1}{\sqrt{t\log t}},\frac{1}{\sqrt{t}(k-\delta)}\right),
\end{align*}
since we have $-1<\alpha-\epsilon<\beta+\epsilon<1$ for $0< k-\delta\leq 3\log t$.
Thus, it suffices to estimate
\[\int_{\sqrt{t\log t}-\delta}^{\frac{t}{\pi}-\delta}\frac{1}{x+\delta}\exp\left(-it\left(\frac{k-\delta}{x+\delta}-\frac{(k-\delta)^2}{2(x+\delta)^2}\right)\right) \exp\left(-\frac{t(k-\delta)^2}{(x+\delta)^2}\right)dx.\]
Changing variables and then integrating by parts shows that the main term is dominated by 
\[\frac{\exp(-\pi (k-\delta)i)}{\pi (k-\delta)}+O\left(\frac{1}{(k-\delta)^2}\right)+O\left(\frac{\sqrt{\log t}}{\sqrt{t}(k-\delta)}\right).\]
Therefore,
\[S_1(t) \ll \sum_{0<k-\delta\le2\log t} \left(\frac{\exp(i\pi \delta)}{\pi}\frac{ (-1)^k}{k-\delta}+\frac{1}{(k-\delta)^2}+\frac{\sqrt{\log t}}{\sqrt{t}(k-\delta)}\right)=O(1).\]
This complete the proof of Theorem.

\section*{Acknowledgements}
\label{S4}

The authors wish to express their sincere gratitude to professor Haseo Ki for giving related references and for his helpful comments which greatly improve the quality of this manuscript.


\begin{thebibliography}{99}

\bibitem{MR2472167}
M.~Lifshits, M.~Weber, 
Sampling
  the {L}indel\"of hypothesis with the {C}auchy random walk, Proc. Lond. Math.
  Soc. (3) 98~(1) (2009) 241--270.

\bibitem{MR705532}
M.~Jutila, On the value
  distribution of the zeta function on the critical line, Bull. London Math.
  Soc. 15~(5) (1983) 513--518.

\bibitem{MR2061214}
H.~Iwaniec, E.~Kowalski, Analytic number theory, Vol.~53 of American
  Mathematical Society Colloquium Publications, American Mathematical Society,
  Providence, RI, 2004.

\end{thebibliography}
\end{document}